\newtheorem{theorem}{Theorem}
\theoremstyle{plain}
\newtheorem{definition}{Definition}
\newtheorem{lemma}{Lemma}
\numberwithin{equation}{section}
\begin{document}
\title[Cauchy's residue theorem...]{Cauchy's residue theorem for a class of
real valued functions}
\author[Branko Sari\'{c}]{Branko Sari\'{c}}
\address{Mathematical Institute, Serbian Academy of Sciences and Arts, Knez
Mihajlova 35, 11001 Belgrade, Serbia; College of Technical Engineering
Professional Studies, Svetog Save 65, 32 000 \v{C}a\v{c}ak, Serbia}
\email{bsaric@ptt.rs}
\date{June 08, 2009}
\subjclass{Primary 26A39; Secondary 26A24, 26A30}
\keywords{The \textit{Kurzweil-Henstock }integral, \textit{Cauchy's} residue
theorem}
\thanks{The author's research is supported by the Ministry of Science,
Technology and Development, Republic of Serbia (Project ON144002)}

\begin{abstract}
Let $\left[ a,b\right] $ be an interval in $\mathbb{R}$ and let $F$ be a
real valued function defined at the endpoints of $[a,b]$ and with a certain
number of discontinuities within $\left[ a,b\right] $. Having assumed $F$ to
be differentiable on a set $\left[ a,b\right] \backslash E$ to the
derivative $f$, where $E$ is a subset of $\left[ a,b\right] $ at whose
points $F$ can take values $\pm \infty $ or not be defined at all, we adopt
the convention that $F$ and $f$ are equal to $0$ at all points of $E$ and
show that $\mathcal{KH-}vt\int_{a}^{b}f=F\left( b\right) -F\left( a\right) $%
, where $\mathcal{KH-}$ $vt$ denotes the total value of the \textit{%
Kurzweil-Henstock} integral. The paper ends with a few examples that
illustrate the theory.
\end{abstract}

\maketitle

\section{Introduction}

Let $\left[ a,b\right] $ be some compact interval in $\mathbb{R}$. It is an
old result that for an ACG$_{\delta }$ function $F:\left[ a,b\right] \mapsto 
\mathbb{R}$ on $\left[ a,b\right] $, which is differentiable almost
everywhere on $\left[ a,b\right] $, its derivative $f$ is integrable (in the 
\textit{Kurzweil-Henstock} sense) on $\left[ a,b\right] $ and $\mathcal{KH-}%
\int_{a}^{b}f=F\left( b\right) -F\left( a\right) $, \cite[Theorem 9.17]{3}.
The aim of this note is to define a new definite integral named the total 
\textit{Kurzweil-Henstock} integral that can be used to extend the above
mentioned result to any real valued function $F$ defined and differentiable
on $\left[ a,b\right] \backslash E$, where $E$ is a certain subset of $\left[
a,b\right] $ at whose points $F$ can take values $\pm \infty $ or not be
defined at all. Unless otherwise stated in what follows, we assume that the
endpoints of $\left[ a,b\right] $ do not belong to $E$. Now, define point
functions $F_{ex}:\left[ a,b\right] \mapsto \mathbb{R}$ and $D_{ex}F:\left[
a,b\right] \mapsto \mathbb{R}$ by extending $F$ and its derivative $f$ from $%
\left[ a,b\right] \backslash E$ to $E$ by $F_{ex}\left( x\right) =0$ and $%
D_{ex}F\left( x\right) =0$ for $x\in E$, so that 
\begin{equation}
F_{ex}\left( x\right) =\left\{ 
\begin{array}{c}
F\left( x\right) \text{, if }x\in \left[ a,b\right] \backslash E \\ 
0\text{, if }x\in E%
\end{array}
\right. \text{ and}  \label{1}
\end{equation}
\begin{equation*}
D_{ex}F\left( x\right) =\left\{ 
\begin{array}{c}
f\left( x\right) \text{, if }x\in \left[ a,b\right] \backslash E \\ 
0\text{, if }x\in E%
\end{array}
\right. \text{.}
\end{equation*}

\section{Preliminaries}

A partition $P\left[ a,b\right] $ of $\left[ a,b\right] \in \mathbb{R}$ is a
finite set (collection) of interval-point pairs $\left\{ \left( \left[
a_{i},b_{i}\right] ,x_{i}\right) \mid i=1,...,\nu \right\} $, such that the
subintervals $\left[ a_{i},b_{i}\right] $ are non-overlapping, $\cup _{i\leq
\nu }\left[ a_{i},b_{i}\right] =\left[ a,b\right] $ and $x_{i}\in \left[
a_{i},b_{i}\right] $. The points $\left\{ x_{i}\right\} _{i\leq \nu }$ are
the tags of $P\left[ a,b\right] $, \cite{2}. It is evident that a given
partition\thinspace of $\left[ a,b\right] $ can be tagged in infinitely many
ways by choosing different points as tags. If $E$ is a subset of $\left[ a,b%
\right] $, then the restriction of $P\left[ a,b\right] $ to $E$ is a finite
collection of $\left( \left[ a_{i},b_{i}\right] ,x_{i}\right) \in P\left[ a,b%
\right] $ such that each $x_{i}\in E$. In symbols, $P\left[ a,b\right]
\left| _{E}\right. =\left\{ \left( \left[ a_{i},b_{i}\right] ,x_{i}\right)
\mid x_{i}\in E,\,i=1,...,\nu _{E}\right\} $. Let $\mathcal{P}\left[ a,b%
\right] $ be the family of all partitions $P\left[ a,b\right] $ of $\left[
a,b\right] $. Given $\delta :\left[ a,b\right] \mapsto \mathbb{R}_{+}$,
named a gauge, a partition $P\left[ a,b\right] \in $ $\mathcal{P}\left[ a,b%
\right] $ is called $\delta $\textit{-fine} if $\left[ a_{i},b_{i}\right]
\subseteq \left( x_{i}-\delta \left( x_{i}\right) ,x_{i}+\delta \left(
x_{i}\right) \right) $. By \textit{Cousin's} lemma the set of $\delta $%
\textit{-fine} partitions of $\left[ a,b\right] $ is nonempty, \cite{4}.

The collection $\mathcal{I}\left( \left[ a,b\right] \right) $ is the family
of compact subintervals $I$ of $\left[ a,b\right] $. The \textit{Lebesgue}
measure of the interval $I$ is denoted by $\left| I\right| $. Any real
valued function defined on $\mathcal{I}\left( \left[ a,b\right] \right) $ is
an interval function. For a function $f:\left[ a,b\right] \mapsto \mathbb{R}$%
, the associated interval function of $f$ is an interval function $f:%
\mathcal{I}\left( \left[ a,b\right] \right) \mapsto \mathbb{R}$, again
denoted by $f$, \cite{5}. If $f\equiv 0$ on $\left[ a,b\right] $ then its
associated interval function is trivial.

A function $f:\left[ a,b\right] \mapsto \mathbb{R}$ is said to be \textit{%
Kurzweil-Henstock} integrable to a real number $A$ on $\left[ a,b\right] $
if for every $\varepsilon >0$ there exists a gauge $\delta _{\varepsilon }:%
\left[ a,b\right] \mapsto \mathbb{R}_{+}$ such that $\left| \sum_{i\leq \nu
}[f\left( x_{i}\right) \left| \left[ a_{i},b_{i}\right] \right| ]-A\right|
<\varepsilon $, whenever $P\left[ a,b\right] $ is a $\delta _{\varepsilon }$-%
\textit{fine} partition of $\left[ a,b\right] $. In symbols, $A=\mathcal{KH-}%
\int_{a}^{b}f$.

\section{Main results}

In what follows we will use the following notations 
\begin{equation}
\Xi _{f}\left( P\left[ a,b\right] \right) =\sum_{i\leq \nu }[f\left(
x_{i}\right) \left| b_{i}-a_{i}\right| ]\text{ and }\Sigma _{\Phi }\left( P%
\left[ a,b\right] \right) =\sum_{i\leq \nu }[\Phi \left( b_{i}\right) -\Phi
\left( a_{i}\right) ]\text{.}  \label{2}
\end{equation}
Now, we are in a position to introduce the total \textit{Kurzweil-Henstock}
integral.

\begin{definition}
For any compact interval $\left[ a,b\right] \in \mathbb{R}$ let $E$ be a
non-empty subset of $\left[ a,b\right] $. A function $f:\left[ a,b\right]
\mapsto \mathbb{R}$ is said to be totally \textit{Kurzweil-Henstock }%
integrable to a real number $\Im $ on $\left[ a,b\right] $ if there exists a
nontrivial interval function $\Phi :\mathcal{I}\left( \left[ a,b\right]
\right) \mapsto \mathbb{R}$ with the following property: for every $%
\varepsilon >0$ there exists a gauge $\delta _{\varepsilon }$ on $\left[ a,b%
\right] $ such that $\left\vert \Xi _{f}\left( P\left[ a,b\right] \right)
-\Sigma _{\Phi }\left( P\left[ a,b\right] \left\vert _{\left[ a,b\right]
\backslash E}\right. \right) \right\vert <\varepsilon $ and $\Sigma _{\Phi
}\left( P\left[ a,b\right] \right) =\Im $, whenever $P\left[ a,b\right] \in 
\mathcal{P}\left[ a,b\right] $ is a $\delta _{\varepsilon }$\textit{-fine}
partition and\textit{\ }$P\left[ a,b\right] \left\vert _{\left[ a,b\right]
\backslash E}\right. $ is its restriction to $\left[ a,b\right] \backslash E$%
. In symbols, $\mathcal{KH-}vt\int_{a}^{b}f=\Im $.
\end{definition}

\begin{definition}
Let $E$ be a non-empty subset of $\left[ a,b\right] $. Then, an interval
function $\Phi :\mathcal{I}\left( \left[ a,b\right] \right) \mapsto \mathbb{R%
}$ is said to be basically summable $($BS$_{\delta _{\varepsilon }})$ to the
sum $\mathbf{\Re }$ on $E$ if there exists a real number $\mathbf{\Re }$
with the following property: given $\varepsilon >0$ there exists a gauge $%
\delta _{\varepsilon }$ on $\left[ a,b\right] $ such that $\left\vert \Sigma
_{\Phi }\left( P\left[ a,b\right] \left\vert _{E}\right. \right) -\Re
\right\vert <\varepsilon $, whenever $P\left[ a,b\right] \in \mathcal{P}%
\left[ a,b\right] $ is a $\delta _{\varepsilon }$\textit{-fine }partition and%
\textit{\ }$P\left[ a,b\right] \left\vert _{E}\right. $ is its restriction
to $E$. If $E$ can be written as a countable union of sets on each of which
the interval function $\Phi $ is BS$_{\delta _{\varepsilon }}$, then $\Phi $
is said to be BSG$_{\delta _{\varepsilon }}$ on $E$.
\end{definition}

Our main result reads as follows.

\begin{theorem}
For any compact interval $\left[ a,b\right] \in \mathbb{R}$ let $E$ be a
non-empty subset of $\left[ a,b\right] $ at whose points a real valued
function $F$ can take values $\pm \infty $ or not be defined at all. If $F$
is defined and differentiable on the set $\left[ a,b\right] \backslash E$,
then $D_{ex}F$ is totally \textit{Kurzweil-Henstock} integrable on $\left[
a,b\right] $ and 
\begin{equation}
\mathcal{KH-}vt\int_{a}^{b}D_{ex}F=F\left( b\right) -F\left( a\right) \text{.%
}  \label{3}
\end{equation}%
If the associated interval function of $F_{ex}$ defined by (\ref{1}) is in
addition basically summable $($BS$_{\delta _{\varepsilon }})$ to the sum $%
\mathbf{\Re }$ on $E$, then 
\begin{equation}
F\left( b\right) -F\left( a\right) =\mathcal{KH-}\int_{a}^{b}D_{ex}F+\mathbf{%
\Re }.  \label{4}
\end{equation}
\end{theorem}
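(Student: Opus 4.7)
The plan is to verify the total Kurzweil--Henstock integrability directly from the definition by exhibiting an explicit interval function $\Phi$ and a compatible family of gauges. I would take $\Phi([c,d])=F_{ex}(d)-F_{ex}(c)$. Because consecutive subintervals of any partition $P[a,b]$ share endpoints, the sum $\Sigma_{\Phi}(P[a,b])$ telescopes to $F_{ex}(b)-F_{ex}(a)=F(b)-F(a)$, using the standing hypothesis $a,b\notin E$. This identity holds for every partition with no gauge restriction at all, so it immediately supplies the second clause of the definition with $\Im=F(b)-F(a)$.

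The substance lies in the first clause. Since $D_{ex}F$ vanishes on $E$, the quantity $\Xi_{D_{ex}F}(P[a,b])-\Sigma_{\Phi}(P[a,b]|_{[a,b]\setminus E})$ reduces to $\sum_{x_{i}\notin E}[F'(x_{i})(b_{i}-a_{i})-(F_{ex}(b_{i})-F_{ex}(a_{i}))]$. For each $x\in[a,b]\setminus E$ I would choose $\delta_{\varepsilon}(x)>0$ small enough that (i) the difference-quotient estimate $|F(y)-F(x)-F'(x)(y-x)|<\varepsilon(b-a)^{-1}|y-x|$ holds on $(x-\delta_{\varepsilon}(x),x+\delta_{\varepsilon}(x))$, and (ii) this same neighborhood lies entirely in $[a,b]\setminus E$; for $x\in E$ the value $\delta_{\varepsilon}(x)$ is irrelevant for this estimate. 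For a $\delta_{\varepsilon}$-fine $P$ and a tag $x_{i}\notin E$, both $a_{i},b_{i}$ then lie in $[a,b]\setminus E$, whence $F_{ex}(a_{i})=F(a_{i})$ and $F_{ex}(b_{i})=F(b_{i})$; splitting $[a_{i},b_{i}]$ at $x_{i}$ and applying the pointwise estimate to each half bounds the $i$-th summand by $\varepsilon(b-a)^{-1}(b_{i}-a_{i})$, and summing over $i$ delivers the required bound.

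Equation (\ref{4}) is then a short corollary. Assuming the associated interval function of $F_{ex}$ is BS$_{\delta_{\varepsilon}}$ to $\mathbf{\Re}$ on $E$, I would refine the gauge from the previous step so that additionally $|\Sigma_{\Phi}(P[a,b]|_{E})-\mathbf{\Re}|<\varepsilon$ for every $\delta_{\varepsilon}$-fine $P$. The telescoping identity $\Sigma_{\Phi}(P[a,b])=F(b)-F(a)$ splits as $\Sigma_{\Phi}(P[a,b]|_{E})+\Sigma_{\Phi}(P[a,b]|_{[a,b]\setminus E})$, so $\Sigma_{\Phi}(P[a,b]|_{[a,b]\setminus E})$ lies within $\varepsilon$ of $F(b)-F(a)-\mathbf{\Re}$; combined with the first-part estimate this gives $|\Xi_{D_{ex}F}(P[a,b])-(F(b)-F(a)-\mathbf{\Re})|<2\varepsilon$, from which (\ref{4}) follows at once.

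The main obstacle is clause (ii) of the gauge construction. Reading differentiability at $x\notin E$ in the standard sense forces $F$ to be defined in a full neighborhood of $x$, so that $[a,b]\setminus E$ is open and the separation $(x-\delta_{\varepsilon}(x),x+\delta_{\varepsilon}(x))\cap E=\emptyset$ is attainable. If instead $E$ is genuinely arbitrary and differentiability is understood only through the domain of $F$, then endpoints $a_{i}$ or $b_{i}$ can lie in $E$ even when $x_{i}$ does not, and the straddle estimate fails at these junctions; in that case the above argument must be supplemented with a correction term in $\Phi$ absorbing the boundary values $F_{ex}(a_{i})$, $F_{ex}(b_{i})$ near the points of $\overline{E}\setminus E$.
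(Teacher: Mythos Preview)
Your proposal is correct and follows essentially the same approach as the paper: the interval function is the associated interval function of $F_{ex}$, telescoping gives $\Im=F(b)-F(a)$, the straddle estimate from differentiability handles the Riemann-sum remainder, and the second assertion is the triangle-inequality combination that the paper isolates as its Lemma~1. Your explicit condition~(ii) and the caveat about openness of $[a,b]\setminus E$ make precise a point the paper leaves implicit in its use of the straddle inequality.
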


Before starting with the proof we give the following lemma.

\begin{lemma}
Let $E$ be a non-empty subset of $\left[ a,b\right] $. If a function $f:%
\left[ a,b\right] \mapsto \mathbb{R}$ is totally \textit{Kurzweil-Henstock }%
integrable on $\left[ a,b\right] $ and $\Phi $ is basically summable $($BS$%
_{\delta _{\varepsilon }})$ to the sum $\mathbf{\Re }$ on $E$, then $f$ is 
\textit{Kurzweil-Henstock }integrable on $\left[ a,b\right] $ and 
\begin{equation}
\mathcal{KH-}vt\int_{a}^{b}f=\mathcal{KH-}\int_{a}^{b}f+\mathbf{\Re }\text{.}
\label{5}
\end{equation}
\end{lemma}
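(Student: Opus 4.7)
The plan is to exploit the obvious decomposition of any tagged partition according to whether its tags lie in $E$ or outside $E$, together with the rigid requirement in the definition of the total \textit{Kurzweil-Henstock} integral that $\Sigma_{\Phi}(P[a,b])=\Im$ exactly on every sufficiently fine partition.

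First, I would observe that for any $P[a,b]\in\mathcal{P}[a,b]$, the tags partition into those belonging to $E$ and those belonging to $[a,b]\setminus E$, so the interval-point pairs of $P[a,b]|_{E}$ and $P[a,b]|_{[a,b]\setminus E}$ together exhaust $P[a,b]$. Consequently the sum in (\ref{2}) splits additively:
\begin{equation*}
\Sigma_{\Phi}\bigl(P[a,b]\bigr)=\Sigma_{\Phi}\bigl(P[a,b]|_{[a,b]\setminus E}\bigr)+\Sigma_{\Phi}\bigl(P[a,b]|_{E}\bigr).
\end{equation*}
This is the only structural identity required for the argument.

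Next, given $\varepsilon>0$, I would apply the total \textit{Kurzweil-Henstock} integrability of $f$ with $\varepsilon/2$ to obtain a gauge $\delta_{1}$ on $[a,b]$ and the basic summability of $\Phi$ on $E$ with $\varepsilon/2$ to obtain a gauge $\delta_{2}$; setting $\delta_{\varepsilon}(x)=\min\{\delta_{1}(x),\delta_{2}(x)\}$ yields a gauge for which every $\delta_{\varepsilon}$-fine partition is simultaneously $\delta_{1}$- and $\delta_{2}$-fine. For such a partition $P[a,b]$, the defining equality $\Sigma_{\Phi}(P[a,b])=\Im$ together with the identity above gives
\begin{equation*}
\Sigma_{\Phi}\bigl(P[a,b]|_{[a,b]\setminus E}\bigr)=\Im-\Sigma_{\Phi}\bigl(P[a,b]|_{E}\bigr),
\end{equation*}
so that the triangle inequality yields
\begin{equation*}
\bigl|\Xi_{f}(P[a,b])-(\Im-\mathbf{\Re})\bigr|\leq\bigl|\Xi_{f}(P[a,b])-\Sigma_{\Phi}(P[a,b]|_{[a,b]\setminus E})\bigr|+\bigl|\mathbf{\Re}-\Sigma_{\Phi}(P[a,b]|_{E})\bigr|<\varepsilon.
\end{equation*}

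This shows that $f$ is \textit{Kurzweil-Henstock} integrable on $[a,b]$ with $\mathcal{KH-}\int_{a}^{b}f=\Im-\mathbf{\Re}$, and since $\Im=\mathcal{KH-}vt\int_{a}^{b}f$, rearranging gives (\ref{5}). There is no serious obstacle here; the only point requiring care is verifying that the restrictions $P[a,b]|_{E}$ and $P[a,b]|_{[a,b]\setminus E}$ really do give a disjoint decomposition of the sum $\Sigma_{\Phi}(P[a,b])$, which is immediate from the fact that each tag belongs to exactly one of $E$ or $[a,b]\setminus E$.
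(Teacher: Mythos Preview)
Your proof is correct and follows essentially the same approach as the paper: take the minimum of the two gauges, use the decomposition $\Sigma_{\Phi}(P)=\Sigma_{\Phi}(P|_{[a,b]\setminus E})+\Sigma_{\Phi}(P|_{E})$ together with $\Sigma_{\Phi}(P)=\Im$, and apply the triangle inequality. The only cosmetic differences are that you make the splitting identity explicit (the paper uses it silently inside its chain of equalities) and you calibrate with $\varepsilon/2$ rather than $\varepsilon$, giving a final bound of $\varepsilon$ instead of $2\varepsilon$.
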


\begin{proof}
Given $\varepsilon >0$ we will construct a gauge for $f$ as follows. Since $f
$ is totally \textit{Kurzweil-Henstock }integrable on $\left[ a,b\right] $
it follows from \textit{Definition 1 }that there exist a real number $\Im $
and an interval function $\Phi $ with the following property: for every $%
\varepsilon >0$ there exists a gauge $\delta _{\varepsilon }^{\ast }$ on $%
\left[ a,b\right] $ such that $\left\vert \Xi _{f}\left( P\left[ a,b\right]
\right) -\Sigma _{\Phi }\left( P\left[ a,b\right] \left\vert _{\left[ a,b%
\right] \backslash E}\right. \right) \right\vert <\varepsilon $ and $\Sigma
_{\Phi }\left( P\left[ a,b\right] \right) =\Im $, whenever $P\left[ a,b%
\right] \in \mathcal{P}\left[ a,b\right] $ is a $\delta _{\varepsilon
}^{\ast }$\textit{-fine} partition and\textit{\ }$P\left[ a,b\right]
\left\vert _{\left[ a,b\right] \backslash E}\right. $ is its restriction to $%
\left[ a,b\right] \backslash E$. Choose a gauge $\delta _{\varepsilon
}^{\star }\left( x\right) $ as required in \textit{Definition 2 }above. The
function $\delta _{\varepsilon }=min\left( \delta _{\varepsilon }^{\ast
},\delta _{\varepsilon }^{\star }\right) $ is a gauge on $\left[ a,b\right] $%
. We now let $P\left[ a,b\right] =\left\{ \left( \left[ a_{i},b_{i}\right]
,x_{i}\right) \mid i=1,...,\nu \right\} $ be a $\delta _{\varepsilon }$%
\textit{-fine }partition of $\left[ a,b\right] $. It is readily seen that 
\begin{equation*}
\left\vert \Xi _{f}\left( P\left[ a,b\right] \right) -\Im +\mathbf{\Re }%
\right\vert =
\end{equation*}%
\begin{equation*}
=\left\vert \Xi _{f}\left( P\left[ a,b\right] \right) -\Im +\Sigma _{\Phi
}\left( P\left[ a,b\right] \left\vert _{E}\right. \right) -[\Sigma _{\Phi
}\left( P\left[ a,b\right] \left\vert _{E}\right. \right) -\mathbf{\Re ]}%
\right\vert \leq 
\end{equation*}%
\begin{equation*}
\leq \left\vert \Xi _{f}\left( P\left[ a,b\right] \right) -\Sigma _{\Phi
}\left( P\left[ a,b\right] \left\vert _{\left[ a,b\right] \backslash
E}\right. \right) \right\vert +\left\vert \Sigma _{\Phi }\left( P\left[ a,b%
\right] \left\vert _{E}\right. \right) -\Re \right\vert <2\varepsilon \text{.%
}
\end{equation*}%
Therefore, $f$ is \textit{Kurzweil-Henstock} integrable on $\left[ a,b\right]
$ and $\mathcal{KH-}\int_{a}^{b}f=\Im -\mathbf{\Re }$, that is 
\begin{equation*}
\mathcal{KH-}vt\int_{a}^{b}f=\mathcal{KH-}\int_{a}^{b}f+\mathbf{\Re }\text{.}
\end{equation*}
\end{proof}

We now turn to the proof of \textit{Theorem 1}.

\begin{proof}
Given $\varepsilon >0$. By definition of $f$ at the point $x\in \left[ a,b%
\right] \backslash E$, given $\varepsilon >0$ there exists $\delta
_{\varepsilon }\left( x\right) >0$ such that if $x\in \left[ u,v\right]
\subseteq \left[ x-\delta _{\varepsilon }\left( x\right) ,x+\delta
_{\varepsilon }\left( x\right) \right] $ and $x\in \left[ a,b\right]
\backslash E$, then 
\begin{equation*}
\left\vert F\left( v\right) -F\left( u\right) -f\left( x\right) \left(
v-u\right) \right\vert <\varepsilon \left( v-u\right) \text{.}
\end{equation*}%
For $F_{ex}$ defined by (\ref{1}) let $F_{ex}:\mathcal{I}\left( \left[ a,b%
\right] \right) \mapsto \mathbb{R}$ be its associated interval function. We
now let $P\left[ a,b\right] =\left\{ \left( \left[ a_{i},b_{i}\right]
,x_{i}\right) \mid i=1,...,\nu \right\} $ be a $\delta _{\varepsilon }$%
\textit{-fine }partition of $\left[ a,b\right] $. Since $F\left( b\right)
-F\left( a\right) =\sum_{i=1}^{\nu }\left[ F_{ex}\left( b_{i}\right)
-F_{ex}\left( a_{i}\right) \right] $ and (remember if $x\in E$, then $%
D_{ex}F=0$)%
\begin{equation*}
\left\vert \Xi _{f}\left( P\left[ a,b\right] \right) -\Sigma _{F}\left( P%
\left[ a,b\right] \left\vert _{\left[ a,b\right] \backslash E}\right.
\right) \right\vert =
\end{equation*}%
\begin{equation*}
=\left\vert \Xi _{f}\left( P\left[ a,b\right] \left\vert _{\left[ a,b\right]
\backslash E}\right. \right) -\Sigma _{F}\left( P\left[ a,b\right]
\left\vert _{\left[ a,b\right] \backslash E}\right. \right) \right\vert
<\varepsilon \left( b-a\right) \text{,}
\end{equation*}%
it follows from \textit{Definition 1} that $D_{ex}F$ is totally \textit{%
Kurzweil-Henstock} integrable on $\left[ a,b\right] $ and 
\begin{equation*}
\mathcal{KH-}vt\int_{a}^{b}D_{ex}F=F\left( b\right) -F\left( a\right) \text{.%
}
\end{equation*}%
Finally, based on the result of \textit{Lemma 1} 
\begin{equation*}
F\left( b\right) -F\left( a\right) =\mathcal{KH-}\int_{a}^{b}D_{ex}F+\mathbf{%
\Re }\text{.}
\end{equation*}
\end{proof}

By \textit{Definition 2 }one can easily see that if $\mathbf{\Re }=0$ then $%
F $ has negligible variation on $E$, \cite[Definition 5.11]{1}. So, we now
in position to define a residual function of $F$.

\begin{definition}
Let $F:\left[ a,b\right] \mapsto \mathbb{R}$. A function $\mathcal{R}:\left[
a,b\right] \mapsto \mathbb{R}$ is said to be a residual function of $F$ on $%
\left[ a,b\right] $ if given $\varepsilon >0$ there exists a gauge $\delta
_{\varepsilon }$ on $\left[ a,b\right] $ such that $\left| F\left(
b_{i}\right) -F\left( a_{i}\right) -\mathcal{R}\left( x_{i}\right) \right|
<\varepsilon $, whenever $P\left[ a,b\right] \in \mathcal{P}\left[ a,b\right]
$ is a $\delta _{\varepsilon }$\textit{-fine }partition.
\end{definition}

\begin{definition}
Let $E$ be a non-empty subset of $\left[ a,b\right] $ and let $F:\left[ a,b%
\right] \mapsto \mathbb{R}$ be a function whose associated interval function 
$F:\mathcal{I}\left( \left[ a,b\right] \right) \mapsto \mathbb{R}$ is BS$%
_{\delta _{\varepsilon }}$ $($BSG$_{\delta _{\varepsilon }})$ to the sum $%
\mathbf{\Re }$ on $E$. Then, a residual function $\mathcal{R}:\left[ a,b%
\right] \mapsto \mathbb{R}$ of $F$ is said to be also BS$_{\delta
_{\varepsilon }}$ $($BSG$_{\delta _{\varepsilon }})$ to the same sum $%
\mathbf{\Re }$ on $E$. In symbols, $\sum_{x\in E}\mathcal{R}\left( x\right) =%
\mathbf{\Re }$.
\end{definition}

Clearly, \textit{Definition 4 }establishes a causal connection between%
\textit{\ Definitions 2 }and\textit{\ 3}. If $E$ is a countable set, the
causality is so obvious. However, if $E$ is an infinite set, then this
connection is not necessarily a causal connection. Namely, if $F:\left[ a,b%
\right] \mapsto \mathbb{R}$ has negligible variation on some subset $E$ of $%
\left[ a,b\right] $, which is a countably infinite set, then its residual
function $\mathcal{R}$ vanishes identically on $E$, so that the sum $%
\sum_{x\in E}\mathcal{R}\left( x\right) $ is reduced to the so-called
indeterminate expression $\infty \cdot 0$ that have, in this case, the null
value. On the contrary, if $F$ has no negligible variation on $E$, and its
residual function $\mathcal{R}$ also vanishes identically on $E$, as in the
case of the \textit{Cantor} function, then the sum $\sum_{x\in E}\mathcal{R}%
\left( x\right) $ is reduced to the indeterminate expression $\infty \cdot 0$
that actually have, in \textit{Cantor's} case, the numerical value of $1$.
By \textit{Definition 4}, we may rewrite (\ref{4}) as follows, 
\begin{equation}
F\left( b\right) -F\left( a\right) =\mathcal{KH-}\int_{a}^{b}D_{ex}F+\sum_{x%
\in E}\mathcal{R}\left( x\right) \text{.}  \label{6}
\end{equation}
If $f$ in addition vanishes identically on $\left[ a,b\right] \backslash E$,
then 
\begin{equation}
F\left( b\right) -F\left( a\right) =\sum_{x\in E}\mathcal{R}\left( x\right) .
\label{7}
\end{equation}
The previous result is an extension of \textit{Cauchy's} residue theorem
result in $\mathbb{R}$.

\section{Examples}

For an illustration of (\ref{6}) and (\ref{7}) we consider the \textit{%
Heaviside} unit function defined by 
\begin{equation}
F\left( x\right) =\left\{ 
\begin{array}{c}
0\text{, if }a\leq x\leq 0 \\ 
1\text{, if }0<x\leq b%
\end{array}
\right. \text{.}  \label{8}
\end{equation}
In this case, if $a<0$, then $\mathcal{KH-}vt\int_{a}^{b}D_{ex}F=1$, in
spite of the fact that $D_{ex}F\equiv 0$ on $\left[ a,b\right] $.
Accordingly, it follows from (\ref{6}) and (\ref{7}) that $\mathcal{R}\left(
0\right) =1$, since 
\begin{equation}
f\left( x\right) =\left\{ 
\begin{array}{c}
+\infty \text{, if }x=0 \\ 
0\text{, otherwise}%
\end{array}
\right. \text{,}  \label{9}
\end{equation}
where $f$ is the derivative of $F$, and $\mathcal{KH-}\int_{a}^{b}D_{ex}F=0$.

Let $\left[ a,b\right] \subset \mathbb{R}$ be an arbitrary compact interval
within which is the point $x=0$. For an illustration of the result (\ref{3})
of \textit{Theorem 1} we consider the real valued function $F\left( x\right)
=1/x$ that is differentiable to $f\left( x\right) =-\left( 1/x^{2}\right) $
at all but the exceptional set $\left\{ 0\right\} $ of $\left[ a,b\right] $.
In spite of the fact that $f$ is not \textit{Kurzweil-Henstock }integrable
on $\left[ a,b\right] $ it follows from (\ref{3}) that $\mathcal{KH-}%
vt\int_{a}^{b}D_{ex}F=\left( a-b\right) /\left( ab\right) $. In this case, $%
\mathcal{R}\left( x\right) $ is not defined at the point $x=0$, that is 
\begin{equation}
\mathcal{R}\left( x\right) =\left\{ 
\begin{array}{c}
+\infty \text{, if }x=0 \\ 
0\text{, otherwise}%
\end{array}
\right. \text{,}  \label{10}
\end{equation}
and $\mathcal{KH-}vt\int_{a}^{b}D_{ex}F$ is reduced to the so-called
indeterminate expression $\infty -\infty $ (in the sense of the difference
of limits) that actually have, in this situation, the real numerical value
of $\left( a-b\right) /\left( ab\right) $.

\end{document}